\def\ps@pprintTitle{%
 \let\@oddhead\@empty
 \let\@evenhead\@empty
 \def\@oddfoot{\centerline{\thepage}}%
 \let\@evenfoot\@oddfoot}
\theoremstyle{plain}
\newcommand\ee{\mathcal{E}}
\newcommand\ff{\mathcal{F}}
\newcommand\ii{\mathcal{I}}
\newcommand\cc{\mathbb{C}}
\newcommand\zz{\mathbb{Z}}
\newcommand\oo{\mathcal{O}}
\newcommand\Hilb{{\rm Hilb}}
\newcommand\pp{{\mathbb{P}}}
\newcommand\po{{\mathbb{P}^1}}
\newcommand\coo{{\mathbb{C}^{1|1}}}
\newcommand\vb{{\,|\,}}
\newtheorem{thm}{Theorem}[section]
\newtheorem{lem}[thm]{Lemma}
\newtheorem{prop}[thm]{Proposition}
\theoremstyle{definition}
\newtheorem{defn}[thm]{Definition}
\theoremstyle{remark}
\newtheorem{ex}{Example}[section]
\journal{Journal of Pure and Applied Algebra}
\begin{document}

\begin{frontmatter}

\title{Families of 0-dimensional subspaces on supercurves of dimension $1\vb 1$}
\author{Mi Young Jang}% \footnote{fn1}}
\ead{myjang@math.upenn.edu}
\address{Department of Mathematics\\
University of Pennsylvania,\\
 209 South 33rd Street\\
Philadelphia, PA 19104-6395}

\begin{abstract}
In the present paper we prove that the Hilbert scheme of 0-dimensional subspaces on supercurves of dimension $1 \vb 1$ exists and it is smooth. We also show that the Hilbert scheme is not projected in general. 
\end{abstract}

\begin{keyword}
Supermanifold \sep Superspace \sep Projectedness \sep Hilbert scheme

\MSC[2010] 14D99 \sep 14M30 \sep 32C11
\end{keyword}

\end{frontmatter}

\tableofcontents

\newpage

\section{Introduction}

Supergeometry is a $\zz_2$-graded generalization of ordinary geometry. The idea of this extension comes from supersymmetry in physics. Instead of sheaves of commutative rings, we consider sheaves of supercommutative rings.
For references, see \cite{berezin,leites,manin}.
Note that superstring perturbation theory can be described as an integral over the muduli space $\mathfrak{M}_g$ of super Riemann surfaces, and if there is a projection map $\mathfrak{M}_g \rightarrow\mathcal{ M}_g$ then we can use the push forward to integrate it. However, $\mathfrak{M}_g$ is non-projected in general.
\begin{prop} (\cite{projected})
The supermoduli space $\mathfrak{M}_g$ is not projected for $g \geq 5$.
\end{prop}
After it has been shown that the supermoduli space is not projected ,
the importance of establishing mathematical foundations about supermanifolds, supermoduli spaces, (analytic) superspaces, etc. has increased.

\bigskip

In this paper, we first show the existence of the (analytic) Hilbert scheme $\Hilb(S)$ of 0-dimensional subspaces on a supercurve $S$ of dimension $1|1$ (see \ref{supercurve} for definition). This Hilbert scheme can be broken up into disjoint union $\Hilb(S)=\bigcup_{(p,q)}\Hilb^{p|q}(S)$ and each $\Hilb^{p|q}(S)$ is a smooth superspace of dimension $p\vb p$. 

\begin{thm}
Let $S$ be a supermanifold of dimension $1\vb 1$. Then the super Hilbert scheme $\Hilb^{p|q}(S)$ is smooth and has dimension $p\vb p$.
\end{thm}

This can be seen as an analogous result to the ordinary case that the Hilbert scheme of $p$ points on a smooth surface is smooth and has dimension $2p$ \cite{fogarty}.

Moreover, perturbative scattering amplitudes of superstring theory is described as integrals over moduli space of super Riemann surfaces with punctures \cite{gidd}.
Note that an element in the super Hilbert scheme $\Hilb^{p\vb q}(S)$ can be viewed as a supercuve $S$ with $(p-q)$ Neveu-Schwarz punctures and $q$  Ramond punctures.

Another interesting related topic that we will not deal with in this paper is Abel's theorem for supercurves. For more details see (\cite{roth}).

\bigskip

The final chapter of this paper is devoted to the (non) splitness of the Hilbert scheme. 
To be specific, the Hilbert scheme $\Hilb^{1|1} \left( S\left( \po, \oo_\po(k)\right) \right)$ is split for any $k$, whereas the Hilbert scheme $\Hilb^{2|1} \left( S\left( \po, \oo_\po(k)\right) \right)$ is not split for all $k \neq 0$. In fact, $\Hilb^{2|1} \left( S\left( \po, \oo_\po(k)\right) \right)$ is not even projected if $k \neq 0$. This also guarantees that any superspace containing $\Hilb^{2|1}  \left( S\left( \po, \oo_\po(k)\right) \right)$ for some nonzero $k$ is not split.

\section{Backgrounds}
\subsection{Supergeometry}

We will review definitions of major terms in this section.

\begin{defn}
A \textit{superspace} is a pair $(S,\oo_S)$ where $S$ is a topological space and $\oo_S=\oo_{S,0}\oplus \oo_{S,1}$ is a sheaf of supercommutative rings which is a locally ringed space. Let $\mathcal{J}$ be the ideal generated by the odd part $\oo_{S,1}$. The bosonic space $S_{b} \subset S$ is defined as the closed subspace $\left(S,\oo_S/\mathcal{J}\right)$.
\end{defn}

From now on, we will only consider the superspaces over $\cc$.

Similar to the ordinary space, We can define locally free sheaves on superspaces. 
The only difference is that they have even and odd ranks. For example, a free sheaf of rank $(p\vb q)$ on a superspace $S$ is $\oo_S^{\,p} \oplus \Pi \oo_S^{\,q}$, where $\Pi \oo_S^{\,q}$ is the parity reversed bundle of $\oo_S^{\,q}$.

A superspace $(S,\oo_S)$ is said to be \textit{split} if there is a locally free sheaf $\ee$ on $S_b$ such that $(S,\oo_S)$ isomorphic to $S(S_{b},\mathcal{E}):=\left(S_{b},\wedge^\bullet \mathcal{E}^\vee \right)$. 
We say a superspace $(S,\oo_S)$ is \textit{locally split} if for any $x\in S$ there is a neighborhood $U$ of $x$ such that $(U,\oo_S|_U)$ is split. Let $m$ be the \textit{dimension} of $S_b$ and let $n$ be the rank of vector bundles that define local split subspaces of $S$. Then the dimension of $(S,\oo_S)$ is defined as $m\vb n$.
We say $S$ is \textit{projected} if it has a projection map from $S$ to its bosonic part $S_b$ so that $\oo_S$ endowed with a $\oo_{S_{b}}$-module structure.

For the rest of this paper, we mainly discuss about analytic superspaces. One basic property of analytic superspace is that, like ordinary analytic spaces, we can take local coordinates. 

\begin{ex} 
An analytic affine superspace 
\[\cc^{m|n}=(\cc^m, \oo_{\cc^{m|n}})=S(\cc^m,\oo_{\cc^m}^{\,n})\]
is one of the simplest examples of a split superspace. Here, $\oo_{\cc^{m}}$ represents the sheaf of analytic functions. The structure sheaf of $\cc^{m|n}$ is given by 
\[\oo_{\cc^{m|n}}=\oo_{\,\cc^m}[ \theta_1,\cdots,\theta_n ]\]
with relations $\theta_i\theta_j = -\theta_j\theta_i$ for all $i$, $j$.

Let $U$ be an open subset of $\cc^m$. For an ideal $I \subset  \oo_{\cc^{m|n}}(U) $, we can define an closed subset $Z(I):=Z \left( I \cap \oo_{\, \cc^m}(U) \right) \subset \cc^m$. The analytic subspace defined by $I$ on U is the superspace $\left( Z(I), \oo_Z:=\oo_U/I \right)$.
\end{ex}

\begin{defn}
An analytic superspace $(S,\oo_S)$ is a superspace which is locally isomorphic to some analytic subspace.
\end{defn}

We say that an analytic superspace $(S,\oo_S)$ is \textsl{smooth} at $x \in S$ if there is an open neighborhood $U$ of $x$ such that  $(U,\oo_S|_U)$ is isomorphic to an open subspace of some analytic affine superspace. An analytic superspace $(S,\oo_S)$ is called smooth if it is smooth at every point in $S$.

A locally split analytic superspace $(S,\oo_S)$ is called a \textit{supermanifold} if $S_b$ is a manifold.
Note that a locally split analytic superspace $(S,\oo_S)$ is smooth if and only if it is a supermanifold. 

\begin{defn}\label{supercurve}
A supercurve is a complex supermanifold of dimension $1\vb n$ for some non-negative integer $n$.
\end{defn}

%Note that the supercurve is much more general than the super Riemann surface, it doesn't need to have superconformal structure.

We will focus on analytic superspaces and will drop \enquote{analytic} for simplicity, and denoting it as superspaces.

Note that the moduli space $\mathfrak{M}_g$ parameterizes super Riemann surfaces of genus $g$, where super Riemann surfaces are supermanifolds of dimension $ 1\vb 1$ with superconformal structure. In general, supercurves need not have superconformal structure. 

\bigskip 

Let $R$ be a supercommutative ring. The Jacobson radical $J(R)$ of $R$ is defined to be the intersection of all maximal ideals of $R$.

\begin{lem} \label{nakayama} \emph{(Nakayama's lemma \cite{lam})}  Let $R$ be a supercommutative ring with the Jacobson radical $J(R) \subset R$. For any finitely generated left $R$-module $M$, $J(R)M=M$ implies $M=0$. 
\end{lem}

\subsection{Super Hilbert Scheme}

In this section, we define super version of Hilbert scheme. 
More careful treatment about the theory of representable functors and super-stacks can be found in \cite{adam, codo2}.

\begin{defn} \par~
\begin{enumerate}[i)]
	\item Let $S$ be a superspace. The \textit{Hilbert functor} ${\mathcal{H}}^{p|q}_{S}$ is the contravariant functor from the category $\mathfrak{S}$ of superspaces to the category of sets defined as follows:

\[{\mathcal{H}}^{p|q}_{S}(B)=
\left\{
\begin{array}{c|c}
  \multirow{4}{*}{ \xymatrix{
\mathcal{Z} \ar@{^{(}->}[r] \ar[d]^\pi
		& S \times B \ar[ld] \\ B} } & \mathcal{Z}  \text{ is a closed subspace of } S \times B \\
    & \text{ such that } \pi_*\oo_\mathcal{Z} \text{ is a locally free }\\
		& \text{ $\oo_B$-module of rank } (p\,|\,q)\text{ and }\quad \\
		&\mathcal{Z} \text{ is finite over } B \qquad \qquad\qquad\
\end{array}
\right\}
\]

 The morphism is defined by the pullback
\[
{\mathcal{H}}^{p|q}_{S}(f)= f^*: {\mathcal{H}}^{p|q}_{S}(B) \rightarrow {\mathcal{H}}^{p|q}_{S}(C)
\]
where $f:C \rightarrow B$ and $B, C \in \mathfrak{S}$.

\item Suppose that the Hilbert functor $\mathcal{H}^{p|q}_{S}$ is representable by the superspace $\Hilb^{p|q}(S)$. We call this the \textit{analytic Hilbert scheme}, abbreviated to the Hilbert scheme.
 
\end{enumerate}

\end{defn}

\begin{ex} The Hilbert functor $\mathcal{H}^{1|1}_{\cc^{1|1}}$ is representable by $\cc^{1|1}$.
\[
\xymatrix{
\mathcal{Z} \ar@{^(->}[r] \ar[d]_{\pi} & \cc^{1|1}_{x \vb \theta }\times \cc^{1|1}_{a \vb \alpha} \ar[dl] \\
\cc^{1|1}_{a\vb \alpha}
}\]
Here, the subscripts define coordinates and $\mathcal{Z}$ is defined by the ideal $(x+a + \alpha \theta)$. This can be checked directly, or as a consequence of the proof of Theorem \ref{hilb}.
\end{ex}

We prove the following theorem in Section $4$.
\begin{thm} \label{hilb} Let $S$ be a supercurve. Then the functor ${\mathcal{H}}^{p|q}_{S}$ is representable by the smooth superspace $\Hilb^{p|q} (S)$ of dimension $p\vb p$.
\end{thm}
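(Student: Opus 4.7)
The plan is to build $\Hilb^{p|q}(S)$ Zariski-locally on $S$ and then glue. Since $S$ is a supercurve of dimension $(1\vb1)$, it admits a cover by opens each isomorphic to an open subspace of $\cc^{1|1}$ with coordinates $(x\vb\theta)$. I would first prove representability of the restricted Hilbert functor on a chart $U\cong\cc^{1|1}$; the gluing is then routine because any $0$-dimensional family $\mathcal{Z}\to B$ is finite over $B$, so after shrinking $B$ its bosonic support lies in a single chart of $S$, and automorphisms of overlapping charts preserve the closed subspaces and the freeness condition on $\pi_\ast\oo_\mathcal{Z}$.

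For the local case, a $B$-point of $\mathcal{H}^{p|q}_U$ is an ideal $I\subset\oo_B[x,\theta]$ whose quotient $A:=\oo_B[x,\theta]/I$ is locally free of rank $(p\vb q)$ over $\oo_B$. I would extract $p$ even parameters from the bosonic reduction: killing the nilpotents of $\oo_B$ and the odd ideal turns $A$ into a classical length-$p$ quotient of $\oo_{B_b}[x]$ corresponding to a monic polynomial $\bar f(x)$ of degree $p$, i.e.\ a point of $\Hilb^p(\cc)=\cc^p$. For the odd directions I would analyze multiplication by $\theta$ on $A$: it is an odd $\oo_B$-linear nilpotent endomorphism commuting with multiplication by $x$, and after choosing an $\oo_B$-basis of $A$ adapted to the bosonic reduction, a division-algorithm argument should yield a canonical presentation of $I$. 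Concretely I expect for $q=p$ a single even generator $F=f+\theta\phi$, with $f$ monic of degree $p$ lifting $\bar f$ and $\phi$ an odd polynomial of degree $<p$; and for general $q$ an additional odd generator $G=g\theta-\psi$, with $g$ monic of degree $p-q$ dividing $f$ and $\psi$ of bounded degree, whose coefficients are constrained by the syzygies forced by $\theta F \in (G)\cdot\oo_B[x,\theta]$. Counting the free coefficients after these constraints should give exactly $p$ even and $p$ odd, exhibiting $\mathcal{H}^{p|q}_U$ as the functor of points of an affine superspace of dimension $(p\vb p)$; this settles representability, dimension, and smoothness simultaneously.

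The principal obstacle is the case $q<p$, where the syzygy identity between $F$ and $G$ must be solved explicitly: I have sanity-checked $(p\vb q)=(2\vb1)$ directly and found that the identity forces the constant term of $\psi$ to vanish and imposes $g\mid f$, leaving exactly two even and two odd free parameters. A secondary technical point is that the canonical form depends Zariski-locally on the choice of basis of $A$ adapted to the bosonic reduction; different choices must be shown to give compatible parametrizations so that the local models patch into a genuine smooth superspace rather than a stack. Once these points are established, the universal family on $\cc^{1|1}\times\Hilb^{p|q}(U)$ is defined tautologically by the universal ideal in canonical form, and the theorem follows.
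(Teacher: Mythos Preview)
Your local analysis is close in spirit to the paper's, but the paper organizes it differently: rather than solving the syzygy $\theta F\in(G)$ by hand, it embeds the naive parameter space $\cc^{p+q\vb p+q}$ of all pairs $(f,g)$ into the picture and applies a flattening stratification (Theorem~\ref{FlatteningStratification}) to carve out the locus where $\pi_*\oo_{\widetilde{\mathcal Y}}$ is free of rank $(p\vb q)$; an explicit computation of two kernel elements then shows this stratum is cut out by $2q$ equations and is isomorphic to $\cc^{p\vb p}$. Your direct syzygy approach would reach the same canonical form and is arguably more elementary, but note that your odd generator has the wrong degree: to get odd rank $q$ one needs $g$ monic of degree $q$ (so that $\theta,x\theta,\dots,x^{q-1}\theta$ survive), not $p-q$; your sanity check $(p\vb q)=(2\vb1)$ does not detect this because there $q=p-q$.

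The genuine gap is in your gluing step. You assert that after shrinking $B$ the bosonic support of $\mathcal Z$ lies in a single chart of $S$, but this fails already for $p\ge 2$ on a curve of genus $\ge 1$: a length-$p$ subscheme can be supported at several points which do not sit in any common coordinate neighborhood (in the analytic category a chart must be biholomorphic to an open in $\cc$, and on a non-rational curve the complement of a point is not such). The paper handles this by covering $\mathcal H^{p|q}_S$ not by charts of $S$ but by open subfunctors indexed by finite collections $U=\{U_i\}$ of \emph{disjoint} coordinate opens together with a distribution of the rank,
\[
\mathcal H^{p|q}_{S,U}=\coprod_{\substack{\sum p_i=p\\ \sum q_i=q}}\ \prod_i \mathcal H^{p_i|q_i}_{U_i},
\]
each of which is representable by a smooth superspace of dimension $(p\vb p)$ by the local result, and then observes $\mathcal H^{p|q}_S=\bigcup_U \mathcal H^{p|q}_{S,U}$. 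You will need this multi-chart decomposition (or an equivalent) to make the gluing go through.
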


Note that the dimension of the Hilbert scheme $\Hilb^{p|q} (S)$ only depends on the even part of the Hilbert polynomial.

\bigskip

\subsection{Obstruction class for splitting}

In this section, we review the definition of an obstruction class which has a critical role in verifying splitness of supermanifolds \cite{dona, projected}.

Consider a supermanifold $S=(M,\oo_S)$. 
Let $\mathcal{J} \subset \oo_S$ be the sheaf of ideals generated by all nilpotents and let $\ee:=(\mathcal{J}/\mathcal{J}^2)^\vee$. 
Let ${\rm Isom}(S,S(M,\ee))$ be a sheaf of local isomorphisms defined by relating an open subset $U \subset M$ to the isomorphisms from $S|_U$ to $S(M,\ee)|_U$. 
%Observe that $S$ is locally isomorphic to the split model $S(M,\ee)$, where $\ee$ is defined by $\ee=(\mathcal{J}/\mathcal{J}^2)^\vee$.
%Therefore, we can define a local isomorphism $\phi \in {\rm Isom}(S,S(M,\ee))$ between $S$ and $S(M,\ee)$. 
Note that ${\rm Isom}(S,S(M,\ee))$ is locally isomorphic to ${\rm Aut} (\wedge^\bullet \ee^\vee) \simeq{\rm Aut} (\wedge^\bullet \ee)$. 
Therefore, for given a supermanifold $S$ which is modeled on $M$ and $\ee$, we get an element $\phi \in H^1 (M, {\rm Aut}(\wedge^\bullet \ee) ) $.
Let $G$ be the set of automorphisms of $\wedge^\bullet \ee$ which act trivially on $M$ and $\ee$.
Since the automorphism induced from $S$ preserves $M$ and $\ee$, and we can say that $\phi \in H^1 (M, G ) $.
% Conversely, an element $S$ in $H^1(M,G)$, with the ideal $\mathcal{J}$ generated by all nilpotents, gives a superspace which is locally isomorphic to $S(M,\ee)$ and $\mathcal{J}/\mathcal{J}^2 \simeq \ee^\vee$.

Consider the filtration of $S$
\[
M =S^{(0)}\subset S^{(1)}\subset \cdots \subset S^{(n)}=S
\] 
where $S^{(i)}=(M,\oo_S/\mathcal{J}^{i+1})$ and $n=\rm{rank}({\,\ee})$. 

Define $G^{(i)}$ to be the set of automorphisms of $S(M,\ee)$ which are trivial on $S(M,\ee)^{(i-1)}$ for $i=1,2,\cdots,n$.
Observe that $G^{(i)}/G^{(i+1)}$ can be identified with $T_{(-)^i}S \otimes \wedge^{i} \ee^\vee$
where $T_{(-)^i}=T_-$ is an odd tangent space if $i$ is odd and $T_{(-)^i}=T_+$ is an even tangent space if $i$ is even. Moreover, it induces a long exact sequence
\[
\cdots \rightarrow H^1(M,G^{(i+1)}) \rightarrow H^1(M,G^{(i)}) \xrightarrow{\omega} H^1( M,T_{(-)^i}M \otimes \wedge^{i} \ee^\vee) \rightarrow \cdots
\] 

We define $\omega_i$ in a similar manner. Suppose $S^{(i)}$ and $S(M,\ee)^{(i)}$ are isomorphic. Then, since $S^{(i+1)}$ and $S(M,\ee)^{(i+1)}$ are locally isomorphic, the local isomorphism defines the cohomology class $\phi^{(i)} \in H^1(M,G^{(i+1)})$. 	
The i-th obstruction class is defined by 
\[\omega_i:=\omega(\phi^{(i-1)}) \in H^1(M, T_{(-)^i}M \otimes \wedge^{i} \ee^\vee) \]

Note that if $S$ is isomorphic to its split model $S(M,\ee)$, then $\phi^{(i+1)}$ is the image of $\phi^{(i)}$ and thus $\omega_i$ is vanishing, for all $i$. 
In conclusion, a non-vanishing obstruction class $\omega_2$ guarantees the non-splitness of a supermanifold $S$.
In section \ref{nonsplit}, we will use this fact to show the non-splitness of the Hilbert scheme.

We also remark the following lemmas.
\begin{lem}(\cite{manin})
A supermanifold of odd dimension $1$ is always split. 
\end{lem}
\begin{lem}(\cite{projected})
A supermanifold of odd dimension $2$ is split if and only if it is projected.
\end{lem}

\section{Local structure of the Hilbert schemes}

We first show the existence and the smoothness of the Hilbert schemes for special cases in this section and extend it to general cases in Section $4$. 
%The Hilbert scheme of the affine space $\cc^{1| 1}$ is the basis for the construction of the 0-dimensional family on supercurves. 
\smallskip

Let's fix coordinates $(x\vb \theta)$ on $\cc^{1|1}$. Consider a family in $\mathcal{H}^{p|q}_{\coo}(Y)$. 
\[
\xymatrix{
\mathcal{Z} \ar@{^(->}[r] \ar[d]_{\pi} & \cc^{1|1}_{x \vb \theta }\times Y \ar[dl] \\
Y&
}\]

Then by the definition, the pushforward $\pi_*\oo_\mathcal{Z}$ is locally free. In fact, it turns out that $\pi_*\oo_\mathcal{Z}$ is free.

\begin{lem} (\cite{jang}) \label{basis} Let $\mathcal{Y}\subset \cc^{1|1}$ be a closed subspace such that $\dim_{\,\cc}{H^0(\cc^{1|1},\oo_{Y})}$ is $p \vb q$. Then $H^0(\cc^{1|1},\oo_{Y})$ has a basis $1,x,\dots,x^{p-1},\theta, x\theta ,\dots, x^{q-1} \theta$ as a $\cc$-vector space.
\end{lem}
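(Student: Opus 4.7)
The strategy is to reduce the claim to a 1-dimensional commutative algebra statement via the $\zz_2$-grading, and then to exploit the fact that the stalks of $\oo_\cc$ are discrete valuation rings.

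First I would decompose the defining ideal $I \subset \oo_{\cc^{1|1}}$ along the parity grading. Since $\mathcal{Y}$ is a closed subspace, $I$ is $\zz_2$-graded, and writing $\oo_{\cc^{1|1}} = \oo_\cc \oplus \theta\,\oo_\cc$ the ideal takes the form $I = I_0 \oplus \theta J$ for ideals $I_0, J \subset \oo_\cc$. The requirement $\theta \cdot I_0 \subset I$ forces $I_0 \subset J$. This yields the decomposition
\[
\oo_\mathcal{Y} \;\cong\; \oo_\cc/I_0 \;\oplus\; \theta\,(\oo_\cc/J),
\]
so that $H^0(\cc^{1|1},\oo_\mathcal{Y}) = A \oplus \theta B$ with $\dim_\cc A = p$ and $\dim_\cc B = q$.

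Next I would establish the following bosonic statement: for any ideal sheaf $I' \subset \oo_\cc$ with $\dim_\cc H^0(\cc,\oo_\cc/I') = n < \infty$, the global sections admit the basis $1,x,\dots,x^{n-1}$. Finite dimensionality forces the support to be a finite set $\{p_1,\dots,p_k\}$, and the global sections decompose as the direct sum of stalks $\bigoplus_i (\oo_\cc/I')_{p_i}$. Each stalk is a finite-dimensional local quotient of the DVR $\oo_{\cc,p_i}\cong \cc\{x-p_i\}$, hence equals $\cc[x]/(x-p_i)^{n_i}$ for some $n_i \geq 1$ with $\sum_i n_i = n$. The Chinese Remainder Theorem then identifies
\[
\bigoplus_i \cc[x]/(x-p_i)^{n_i} \;\cong\; \cc[x]/m(x), \qquad m(x)=\prod_i (x-p_i)^{n_i},
\]
a truncated polynomial ring of dimension $n$ with standard basis $1,x,\dots,x^{n-1}$.

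Applying this bosonic fact to both $A$ (with $n=p$) and $B$ (with $n=q$) and combining with the parity decomposition yields the claimed basis $1,x,\dots,x^{p-1},\theta,\theta x,\dots,\theta x^{q-1}$. The essential point, and the main obstacle to any higher-dimensional analogue, is the identification of finite-dimensional quotients of $\oo_{\cc,p_i}$ with truncated polynomial rings in a single variable; this relies on the 1-dimensionality of the bosonic base making $\oo_{\cc,p_i}$ a DVR, so that every nonzero ideal is already a power of the maximal ideal.
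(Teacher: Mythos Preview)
The paper states this lemma without proof, so there is no argument to compare against; your proposal is correct and supplies the omitted details. The parity decomposition $I = I_0 \oplus \theta J$ with $I_0 \subset J$ is exactly the right reduction, and the bosonic step---identifying a finite-length quotient of $\oo_\cc$ with $\cc[x]/m(x)$ via the DVR structure of each stalk together with the Chinese Remainder Theorem---is clean and complete.

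One small remark worth making explicit: the assertion that $I$ is $\zz_2$-homogeneous is not automatic for arbitrary ideals of $\oo_{\cc^{1|1}}$ (e.g.\ $(x+\theta)$ is not graded), but it \emph{is} part of the definition of a sub-superspace that the quotient sheaf be $\zz_2$-graded, and indeed the hypothesis $\dim_\cc H^0(\oo_\mathcal{Y})=(p\,|\,q)$ already presupposes this. Once that is noted, the rest of your argument goes through without change.
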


\begin{lem} (\cite{jang}) \label{inverse} Let $X=\left(x_{ij}\right)$ be an $n \times n$ (left) invertible matrix and let $\Gamma=(\gamma_{ij})$ be an $n \times n$ matrix such that $\gamma_{ij}^{\,2}=0$ for each $i$ and $j$, then $X + \Gamma$ is (left) invertible.
\end{lem}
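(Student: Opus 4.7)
The plan is to reduce the problem to inverting a matrix of the form $I + M$ whose entries lie in a nilpotent ideal of the ambient supercommutative ring $R$, and then to write the inverse explicitly as a terminating geometric series.

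First, I would fix a left inverse $Y$ of $X$, so that $YX = I$, and left-multiply $X + \Gamma$ by $Y$. This reduces the task to producing a left inverse of $I + M$, where $M := Y\Gamma$. Because the entries of $M$ are $R$-linear combinations of the $\gamma_{ij}$, they all lie in the ideal $J \subset R$ generated by the $\gamma_{ij}$.

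Second, I would verify that $J$ is nilpotent. The generating set $\{\gamma_{ij}\}_{1 \le i,j \le n}$ has exactly $n^2$ elements, each of square zero. In any product of more than $n^2$ of these generators, pigeonhole forces some $\gamma_{ij}$ to appear at least twice; and in a supercommutative ring the factors may be rearranged, up to an overall sign, so as to bring two occurrences of the repeated generator adjacent, whereupon the product collapses via $\gamma_{ij}^2 = 0$. Hence $J^{n^2+1} = 0$, and consequently every entry of $M^{n^2+1}$ vanishes.

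Third, once $M^{n^2+1} = 0$ is in hand, the finite sum
\[
N := \sum_{k=0}^{n^2}(-M)^k
\]
satisfies $N(I + M) = I$, so $NY$ is a left inverse of $X + \Gamma$. The only delicate point in the whole argument is the nilpotence step: one must confirm that the signs introduced by anticommutation of odd generators do not interfere with the collapse. They do not, since any transposition contributes only a factor $\pm 1$ and the repeated generator squares to zero regardless of its parity, so the pigeonhole argument closes and the remainder of the proof is formal.
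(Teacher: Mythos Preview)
Your argument is correct, and in fact the paper states this lemma without proof, so there is nothing to compare against. The reduction to $I+M$ via a left inverse of $X$, followed by the terminating geometric series once $M$ is shown to be nilpotent, is exactly the standard route.

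One minor refinement you may want to record: the pigeonhole step uses that the generators can be permuted up to sign, which is valid for \emph{homogeneous} elements of a supercommutative ring. The lemma as stated does not assume the $\gamma_{ij}$ are homogeneous. Over $\mathbb{C}$ this is harmless: write $\gamma_{ij}=\gamma_{ij}^{0}+\gamma_{ij}^{1}$ with $\gamma_{ij}^{0}$ even and $\gamma_{ij}^{1}$ odd; then $(\gamma_{ij}^{1})^{2}=0$ automatically, and comparing graded pieces of $\gamma_{ij}^{2}=0$ forces $(\gamma_{ij}^{0})^{2}=0$ as well. Thus $J$ is generated by at most $2n^{2}$ homogeneous square-zero elements, your pigeonhole argument goes through verbatim with the bound $J^{2n^{2}+1}=0$, and the rest is unchanged. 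In the paper's actual application the relevant entries are odd, so your original bound $n^{2}+1$ already suffices there.
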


   \begin{prop} \label{gen} Pick $[\mathcal{Z} \xrightarrow{\pi} Y] \in \mathcal{H}^{p|q}_{\cc^{1|1}}(Y)$, then $\pi_*\oo_{\mathcal{Z}}$ is a free $\oo_{{Y}}$-module generated by $1,x,\dots,x^{p-1}, \theta, x\theta ,\dots, x^{q-1}\theta$.
	%, i.e., $\pi_* \oo_{\mathcal{Z}}$ is isomorphic to $\oo_{\mathcal{B}}^p \oplus \Pi \oo_{\mathcal{B}}^q$.
\end{prop}

\begin{proof}

Let $R=\cc[x\vb \theta]$ and let $I \subset R$ be an ideal such that $\dim_\cc R/I = p \vb q$. Lemma \ref{basis} says that $1,x,\cdots,x^{p-1},\theta,x\theta,\cdots,x^{q-1}\theta$ generate $R/I$ as a $\cc$-vector space.

Pick $y \in Y$ and let $\ii$ be the ideal sheaf of $\mathcal{Z}$. Then $I:=(\ii_y+ m_y)/m_y$ can be viewed as an ideal in $R$, where $m_y$ is the maximal ideal of the local ring $\oo_{Y,y}$. Then $\cfrac{(\pi_* \oo_\mathcal{Z})_y}{m_y (\pi_* \oo_\mathcal{Z})_y}$ is isomorphic to $R/I$ and has rank $p\vb q$ as a $\cc$-vector space.
Therefore, $\cfrac{(\pi_* \oo_\mathcal{Z})_y}{m_y (\pi_* \oo_\mathcal{Z})_y}$ is generated by $1,x,\cdots,x^{p-1},\theta,x\theta,\cdots,x^{q-1}\theta$. 

By Lemma \ref{nakayama}, we can find an open neighborhood $U$ of $y$ such that $\pi_* \oo_\mathcal{Z}|_U$ is generated by $1,x,\cdots,x^{p-1},\theta,x\theta,\cdots,x^{q-1}\theta$ as an $\oo_Y|_U$-module. Therefore, $\pi_* \oo_\mathcal{Z}$ is a $\oo_Y$-module with free generators 
\[1,x,\cdots,x^{p-1},\theta,x\theta,\cdots,x^{q-1}\theta.\]

\end{proof}

\subsection{Flattening Stratifications}

Flattening stratifications provide a key step for proving the existence of the ordinary Hilbert scheme(\cite{flat}). 
We demonstrate a super-version of the Fattening stratification that is needed in our situation.

\begin{prop} (Flattening Stratification)\label{flat} Let $X$ and $Y$ be analytic superspaces. Let $\ff$ be a coherent sheaf of modules on $Y \times X$ such that the restriction of the support of $\mathcal{F}$ to each fiber of the projection $Y \times X \rightarrow X$ is zero dimensional. Then for each $(p,q) \in \mathbb{N} \times \mathbb{N}$ we have a locally closed subspace $X_{(p,q)} \subset X$ with the following properties:
\begin{enumerate}[i)]
\item $X= \dot{\cup}_{(p,q)} X_{(p,q)}$
\item $\pi_*\ff|_{X_{(p,q)}}$ is locally free of rank $p \vb q$
\item for any analytic superspace $C$ and a map $f: C \rightarrow X$, the pullback $f^*\ff$ is flat over $C$ if and only if $f$ factors through $C \rightarrow X_{(p,q)} \hookrightarrow X$ for some $(p,q)\in \mathbb{N} \times \mathbb{N}$
\end{enumerate}
\end{prop}

\begin{proof}
Pick $x \in X_b$. Then there are $p,q \in \mathbb{N}$ such that 
\[\dim_{k(x)} \ff_x \times_{\oo_{X,x}} k(x)= p\vb q.\] 
Using Lemma \ref{nakayama}, find a neighborhood $U$ of $x$ such that generators of $\ff_x$ also generate $\ff$ on $U$. Then $\ff|_U$ has $p$ even and $q$ odd generators as an $\oo_{X}|_U$-module and they define the surjection
\[
\oo_U^{\,p} \oplus \Pi \oo_U^{\,q} \xrightarrow{\zeta} \ff|_U \rightarrow 0.
\]

Since $\ff$ is coherent, $\ker{\zeta}$ is also coherent and thus it is finitely generated. 
By shrinking $U$, if necessary, we have an exact sequence
\[
 \oo_U^{\,s}\oplus \Pi \oo_U^{\,t} \xrightarrow{\sigma} \oo_U^{\,p} \oplus \Pi \oo_U^{\,q} \xrightarrow{\zeta} \ff|_U \rightarrow 0 
\]
where the image of $\sigma$ is the kernel of $\zeta$.

Consider a map $f: C \rightarrow X|_U$ and the induced exact sequence
\[
	\oo_C^{\,s} \oplus \Pi \oo_C^{\,t} \xrightarrow{f^*\sigma} \oo_C^{\,p} \oplus \Pi \oo_C^{\,q} \xrightarrow{f^*\zeta} f^* (\ff|_U) \rightarrow 0
\]

Observe that $f^* \left(\ff|_U\right)$ is free of rank $p \vb q$ if and only if $f^*\sigma =0 $.
Let $U_\sigma \subset U$ be the closed subspace of $U$ defined by the ideal $I=\left( \sigma_{ij} \right)_{i,j}$ where $\sigma=(\sigma_{ij}) $ is the matrix representation of $\sigma$. Then we can see that $f^* \ff|_U$ is free of rank $p \vb q$ if and only if $f$ factors through $U_\sigma$.

Therefore, $U_\sigma$ represents the functor $\mathcal{G}_U$ defined by 
\[\mathcal{G}_U(f:C \rightarrow X|_U)=\left\{ f^*\ff \rightarrow C \text{ is flat of rank }(p\vb q) \right\}\] 
We can glue all $U_\sigma$'s with fixed $(p\vb q)$ by the universality of representable functors, and $X_{(p,q)} := \cup_\sigma U_\sigma$ satisfies the required properties.
\end{proof}

A flattening stratification plays a pivotal role in constructing the super Hilbert scheme $\Hilb^{p|q}(\cc^{1|1})$ in
the next section.

\subsection{Defining Equation for the Hilbert Scheme} \label{equ}

Let's fix coordinates 
\[\left( \left( x\vb \theta \right), \left(a_0,\cdots,a_{p-1},b_0, \cdots, b_{q-1} \vb \alpha_0,\cdots,\alpha_{q-1}, \beta_0,\cdots,\beta_{p-1} \right) \right)\] on $\coo \times \cc^{p+q|p+q}$.

Let $\mathcal{Y}$ be the closed subspace of $\coo \times \cc^{p+q|p+q}$ defined by the ideal
	\[{J}=\left( x^p + \sum\limits_{i=0}^{p-1}{a_i x^i} + \sum\limits_{i=0}^{q-1}{\alpha_i x^i} \theta,\ x^q \theta + \sum\limits_{i=0}^{q-1}{b_i x^i}\theta + \sum\limits_{i=0}^{p-1}{\beta_i x^i} \right) \]
and let $\pi: \mathcal{Y} \rightarrow \cc^{p+q\vb p+q}$ be the projection 
\[\xymatrix{
\mathcal{Y}\; \ar@{^(->}[r] \ar[d]_{\pi} & \coo \times \cc^{p+q|p+q} \ar[dl]\\
\cc^{p+q|p+q} &
}\]

Then, according to Proposition \ref{flat}, we can find locally closed subspaces $ \cc^{p+q|p+q} _{(m,n)} \subset  \cc^{p+q|p+q}$ for each $(m,n) \in \mathbb{N}\times \mathbb{N}$ such that $\cc^{p+q|p+q} = \bigcup_{m,n}  \cc^{p+q|p+q}_{(m,n)}$ and each $\cc^{p+q|p+q} _{(m,n)}$ has the universal property.

We follow the proof of Proposition \ref{flat} to show that $\cc^{p+q|p+q}_{(p,q)} \simeq \cc^{p|p}$. 

%%   mention==>> Y and J as  above  !!!!!!!!
\begin{thm} $\cc^{p+q|p+q}_{(p,q)}$ is isomorphic to $\cc^{p|p}$. 
\end{thm}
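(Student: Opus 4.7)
The plan is to combine the flattening stratification (Theorem \ref{FlatteningStratification}) with Proposition \ref{gen} to describe $\cc^{p+q|p+q}_{(p,q)}$ as an explicit vanishing locus, and then to identify this locus with $\cc^{p|p}$. By Proposition \ref{gen}, on the flattening stratum the sheaf $\pi_*\oo_{\widetilde{\mathcal{Y}}}$ is freely generated as an $\oo_B$-module (with $B=\cc^{p+q|p+q}$) by $1,x,\dots,x^{p-1},\theta,x\theta,\dots,x^{q-1}\theta$, giving a canonical surjection $\Phi:\oo_B^p\oplus\Pi\oo_B^q\twoheadrightarrow\pi_*\oo_{\widetilde{\mathcal{Y}}}$. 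Following the proof of Theorem \ref{FlatteningStratification}, the stratum is precisely the closed subspace cut out by the entries of a presentation matrix for $\ker\Phi$.

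Next I would produce explicit generators of $\ker\Phi$ by writing down low-degree elements of $\widetilde{I}=(f,g)$ and reducing them to the basis $\{x^i,x^j\theta\}$. The obvious candidates are $\theta g=-R(x)\theta$ (already of $x$-degree $<p$) and a supercommutative $S$-pair such as $\theta f-x^{q-p}g$ when $q\geq p$ (with the symmetric combination when $q<p$), which cancels the top-weight monomial and yields an element of $x$-degree less than $\max(p,q)$; multiplying these by $x$ and $\theta$ and iteratively reducing modulo $f,g$ produces a finite generating set. The $\oo_B$-coefficients of the reduced expressions are then the entries of a presentation matrix for $\ker\Phi$ and generate the defining ideal $J$ of $\cc^{p+q|p+q}_{(p,q)}$. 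The expected outcome is that $J$ contains $q$ even and $q$ odd generators whose linearizations at the origin triangulate with respect to a distinguished subset of $q$ of the even coordinates $a_i,b_j$ and $q$ of the odd coordinates $\alpha_i,\beta_j$.

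If the linearization triangulates in this sense, the super implicit function theorem eliminates these distinguished coordinates as analytic functions of the remaining $p$ even and $p$ odd ones, yielding the isomorphism $V(J)\cong\cc^{p|p}$. The main obstacle is twofold: verifying that the exhibited syzygies actually exhaust $\ker\Phi$ (a Gr\"obner-basis style completeness check in the supercommutative ring $\oo_B[x,\theta]$, with care required for sign conventions on odd variables and the relation $\theta^2=0$), and confirming the triangular structure of the linearization. A cleaner alternative that sidesteps the completeness issue is to construct the inverse map $\cc^{p|p}\to\cc^{p+q|p+q}_{(p,q)}$ directly by parametrizing a family of length-$(p|q)$ ideals via coordinates $(a_0,\dots,a_{p-1}\vb\alpha_0,\dots,\alpha_{p-1})$ on $\cc^{p|p}$ and invoking the algorithm of Proposition \ref{gen} to recover the remaining coefficients of $f$ and $g$ canonically.
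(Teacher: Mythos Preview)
Your overall framework matches the paper's: use the flattening stratification to reduce the problem to computing the ideal generated by the entries of a presentation matrix for $\ker\Phi$, where $\Phi$ is the surjection from the free module on $\{x^i,x^j\theta\}$ onto $\pi_*\oo_{\widetilde{\mathcal Y}}$. But the two obstacles you flag---exhausting $\ker\Phi$ and verifying a triangular linearization---are precisely what the paper sidesteps with a device you have not found, and without it your plan remains a sketch rather than a proof.

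The missing idea is a coordinate change on $\cc^{p+q|p+q}$ \emph{before} any syzygy computation. One always has $p\ge q$ (if $f=x^p+\cdots\in I$ then $\theta f$ is a nontrivial relation among $\theta,x\theta,\dots,x^p\theta$, forcing $q\le p$), so one may long-divide both $f$ and $g$ by $x^q+\sum b_ix^i$ and regroup to obtain, in new coordinates $(a_i,b_i,c_i\mid\alpha_i,\beta_i,\gamma_i)$,
\[
f=(x^q+b)(x^{p-q}+a)+c+\beta(\theta+\alpha),\qquad g=(x^q+b)(\theta+\alpha)+\gamma,
\]
where $a,b,c,\alpha,\beta,\gamma$ abbreviate the corresponding polynomials in $x$. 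In these coordinates the natural syzygies are $f(\theta+\alpha)-g(x^{p-q}+a)=c(\theta+\alpha)-\gamma(x^{p-q}+a)$ and $g(\theta+\alpha)=\gamma(\theta+\alpha)$; after reduction, their coefficients lie in the ideal $(c_0,\dots,c_{q-1},\gamma_0,\dots,\gamma_{q-1})$. Thus the stratum is contained in the \emph{linear} subspace $\mathcal H=\{c_i=\gamma_i=0\}\cong\cc^{p|p}$, and no implicit function theorem is needed.

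For completeness the paper does not attempt a Gr\"obner-style exhaustion over the full base. Instead it restricts to $\mathcal H$ first, where $f=(x^q+b)(x^{p-q}+a)+\beta(\theta+\alpha)$ and $g=(x^q+b)(\theta+\alpha)$ factor, and shows directly that if $Cf+Dg$ has $x$-degree ${<}p$ in its even part and ${<}q$ in its odd part then comparing leading coefficients forces $C=D=0$. This two-line degree argument gives $\ker\Phi|_{\mathcal H}=0$, hence $\cc^{p+q|p+q}_{(p,q)}=\mathcal H$ exactly. Your proposed syzygies $\theta g$ and $\theta f-x^{p-q}g$ are the specializations of the paper's at $\alpha=a=0$; in the original coordinates they produce a non-linear ideal and you would genuinely need the inductive triangular analysis you anticipate. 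The long-division change of coordinates is what collapses that work.
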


\begin{proof}
Let $\mathcal{Y} \subset \coo \times \cc^{p+q\vb p+q}$ be the closed subspace defined by the ideal 
\[{J}=\left( x^p + \sum\limits_{i=0}^{p-1}{a_i x^i} + \sum\limits_{i=0}^{q-1}{\alpha_i x^i} \theta,\ x^q \theta + \sum\limits_{i=0}^{q-1}{b_i x^i}\theta + \sum\limits_{i=0}^{p-1}{\beta_i x^i} \right) \] 
For simplicity, denote generators of $J$ by 

$f:=x^p + \sum\limits_{i=0}^{p-1}{a_i x^i} + \sum\limits_{i=0}^{q-1}{\alpha_i x^i} \theta \ $ and $ \ g:=x^q \theta + \sum\limits_{i=0}^{q-1}{b_i x^i}\theta + \sum\limits_{i=0}^{p-1}{\beta_i x^i}$.

Apply the long division by $\left( x^q+\sum\limits_{i=0}^{q-1}b_i x^i \right)$ to $f$ and $g$
\[\begin{split}
f=&\left(x^q+\sum_{i=0}^{q-1}b_i x^i\right)\left(x^{p-q}+\sum_{i=0}^{p-q-1}c'_i x^i\right)+\sum_{i=0}^{q-1}d\,'_i x^i +  \sum_{i=0}^{q-1}\gamma_i x^i\theta\\
g=&\left(x^q + \sum_{i=0}^{q-1}b_i x^i\right)\left(\theta + \sum_{i=0}^{p-q-1}\delta_i x^i\right) +\sum_{i=0}^{q-1}\epsilon_i x^i
\end{split}\]
 
Then change coordinate on $\cc^{p+q|p+q}$ to make this form
\[\begin{split}
f=& (x^q+\sum_{i=0}^{q-1}b_i x^i)(x^{p-q}+\sum_{i=0}^{p-q-1}a_i x^i)+\sum_{i=0}^{q-1}c_i x^i +  \sum_{i=0}^{q-1}\beta_i x^i (\theta + \sum_{i=0}^{p-q-1}\alpha_i x^i)\\
g=& (x^q + \sum_{i=0}^{q-1}b_i x^i)(\theta + \sum_{i=0}^{p-q-1}\alpha_i x^i) +\sum_{i=0}^{q-1}\gamma_i x^i
\end{split}\]

Denote $\sum_{i=0}^{p-q-1}a_i x^i, \sum_{i=0}^{q-1}b_i x^i,\cdots$ by $a,b,\cdots$ for simplicity.

According to the proof in Proposition \ref{flat}, there is an open subset $U \subset \cc^{p+q|p+q}$ and an exact sequence
\begin{equation}\label{ses}
\oo_{U}^{\,s} \oplus \Pi \oo_U^{\,t} \xrightarrow{\sigma} 
\oo^{\,p}_U \oplus \Pi \oo^{\,q}_U \xrightarrow{\phi}
\pi_*\oo_{\mathcal{Y}} \big{|}_U\rightarrow 0
\end{equation}
where $\cc^{p+q|p+q}_{(p,q)}$ is defined by the ideal $I:=\left( \sigma_{ij} \right)_{i,j}$ and the  map $\phi$ is defined as 
\[(A_i\vb \mathcal{A}_j)_{i,j} \mapsto \sum_{i=0}^p A_i x^i + \sum_{j=0}^q \mathcal{A}_j x^j \theta. \]

We claim that $I$ is generated by $c_0, \cdots, c_{q-1}$ and $\gamma_0, \cdots, \gamma_{q-1}$.

As a first step, we assert that 
\begin{equation}\label{ker1}
(\sum_{i=0}^{q-1}c_i x^i)\theta + (\sum_{i=0}^{q-1}c_i x^i)(\sum_{i=0}^{p-q-1}\alpha_i x^i) -\sum_{i=0}^{q-1}\gamma_i x^i(x^{p-q}+\sum_{i=0}^{p-q-1}a_i x^i)
\end{equation}
and
\begin{equation}\label{ker2}
(\sum_{i=0}^{q-1}\gamma_i x^i)\theta + (\sum_{i=0}^{q-1}\gamma_i x^i)(\sum_{i=0}^{p-q-1}\alpha_i x^i)
\end{equation}
are contained in the kernel of $\phi$.

Observe that
\begin{equation*}\begin{split}
f&(\theta + \alpha)-g(x^{p-q}+a)=\,c(\theta+\alpha)-\gamma(x^{p-q}+a)\\
&=\,(\sum_{i=0}^{q-1}c_i x^i)\theta + (\sum_{i=0}^{q-1}c_i x^i)(\sum_{i=0}^{p-q-1}\alpha_i x^i) -\sum_{i=0}^{q-1}\gamma_i x^i(x^{p-q}+\sum_{i=0}^{p-q-1}a_i x^i) \\
\end{split}\end{equation*}
and
\begin{equation*}\begin{split}
g&(\theta + \alpha)
=\,\gamma(\theta + \alpha)\\
&=\,(\sum_{i=0}^{q-1}\gamma_i x^i)\theta + (\sum_{i=0}^{q-1}\gamma_i x^i)(\sum_{i=0}^{p-q-1}\alpha_i x^i)\qquad\qquad\qquad\qquad\qquad\quad\quad\ 
\end{split}\end{equation*}

Since $\mathcal{Y}$ is defined by the ideal generated by $f$ and $g$, (\ref{ker1}) and (\ref{ker2}) are in the kernel of $\phi$.%%%%%%
\[((c_0\alpha_0-a_0\gamma_0, \cdots ,\gamma_{q-1},\; \overbrace{0,\cdots,0}^{p-q}\;),(c_0 , \cdots , c_{q-1}))  \in \ker\phi \]
\[((\gamma_0 \alpha_0 , \cdots,\gamma_{q-1}\alpha_{p-q-1},\; \overbrace{0,\cdots , 0}^{q}\;), (\gamma_0 , \cdots , \gamma_{q-1}
))\in \ker\phi \] 
Hence, $\cc^{p+q|p+q}_{(p,q)}\, \cap\ U $ is contained in the closed subspace $ Z\left (\left\{c_i,\gamma_i\right\}_{i=0}^{q-1}\right) \cap U $. By restricting (\ref{ses}) to ${\mathcal{H}} := Z\left (\left\{c_i,\gamma_i\right\}_{i=0}^{q-1}\right) \cap U $ we get 
\[
\oo^{\,s'}_\mathcal{H} \oplus \Pi \oo^{\,t'}_\mathcal{H} \xrightarrow{\sigma_{{\mathcal{H}}}} 
\oo^{\,p}_\mathcal{H} \oplus \Pi \oo^{\,q}_\mathcal{H} \xrightarrow{\phi_\mathcal{H}} 
\pi_* \oo_\mathcal{Y}|_\mathcal{H} \rightarrow 0  
\]

\textit{\underline{Claim}:} $\phi_\mathcal{H}$ is an isomorphism.

Let $\left(A_1,\cdots,A_p \vb \mathcal{A}_1,\cdots,\mathcal{A}_q \right)$ be an element in a section of $\ker{\phi_\mathcal{H}}$.
We can find sections $C_i$'s and $\mathcal{D}_j$'s of $\oo_{\cc^{1|1}\times \cc^{p+q|p+q}}$ such that
\begin{align*}
\sum_{i=0}^{p-1}&A_i x^i +  \sum_{i=0}^{q-1}\mathcal{A}_i x^i\theta \\
&= C f + \mathcal{D} g \\
&= C (x^q+b) (x^{p-q}+a)+C \beta \alpha + C\beta\theta
   +\mathcal{D} \theta(x^q+b) + \mathcal{D} \alpha(x^q+b).
\end{align*}
I.e.,
\begin{equation}\begin{split}\label{ker3}
\sum_{i=0}^{p-1}A_i x^i= C \left( x^q+\sum_{i=0}^{q-1}b_i x^i \right) \left( x^{p-q}+\sum_{i=0}^{p-q-1}a_i x^i \right)
\qquad \qquad\qquad \qquad\\+C\left(\sum_{i=0}^{q-1}\beta_i x^i \right) \left(\sum_{i=0}^{p-q-1}\alpha_i x^i \right)
+\mathcal{D} \left( \sum_{i=0}^{p-q-1}\alpha_i x^i \right) \left( x^q+\sum_{i=0}^{q-1}b_i x^i\right)
\end{split}\end{equation}
and
\begin{equation}\label{ker4}
\sum_{i=0}^{q-1}\mathcal{A}_i x^i=C \left( \sum_{i=0}^{q-1}\beta_i x^i \right)+\mathcal{D} \left( x^q+\sum_{i=0}^{q-1}b_i x^i\right) 
\qquad \qquad\qquad\qquad \qquad
\end{equation}

Comparing the highest degree terms in (\ref{ker3}), we see that $C=0$. Similarly, from (\ref{ker4}) we can check $\mathcal{D}=0$. 
Then $A_i$ and $\mathcal{A}_j$ vanish for all $i$ and $j$, and $\phi_\mathcal{H}$ is an isomorphism. 
Therefore,  $\cc^{p+q|p+q}_{(p,q)}$ is defined by the ideal \[ \left(c_0,\cdots,c_{q-1},\gamma_0,\cdots,\gamma_{q-1} \right) \] 
and thus $\cc^{p+q|p+q}_{(p,q)}$ is isomorphic to $\cc^{p|p}$.

\end{proof}

\begin{thm}\label{local} $\cc^{p|p}$  represents the Hilbert functor $\mathcal{H}^{p|q}_{\cc^{1|1}}$.
\end{thm}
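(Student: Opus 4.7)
The plan is to produce an explicit natural isomorphism $\Hom_{\mathfrak{S}}(-,\cc^{p|p}) \cong \mathcal{H}^{p|q}_{\cc^{1|1}}$. The universal family $\mathcal{Z}_{\mathrm{univ}}$ is the restriction of $\widetilde{\mathcal{Y}}$ to the flattening stratum $\cc^{p+q|p+q}_{(p,q)} \simeq \cc^{p|p}$, which by the preceding theorem is automatically flat of rank $(p\vb q)$ over $\cc^{p|p}$. The Yoneda map then sends $\phi : B \to \cc^{p|p}$ to the pullback $\phi^*\mathcal{Z}_{\mathrm{univ}} \to B$, so the real task is to construct the inverse.

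Given $[\mathcal{Z} \xrightarrow{\pi} B] \in \mathcal{H}^{p|q}_{\cc^{1|1}}(B)$, I would invoke Proposition \ref{gen} to conclude that $\pi_*\oo_{\mathcal{Z}}$ is free on the basis $1,x,\dots,x^{p-1},\theta,x\theta,\dots,x^{q-1}\theta$. Expanding $x^p$ and $x^q\theta$ in this basis produces unique even sections $a_i, b_j \in \Gamma(B,\oo_B)_0$ and odd sections $\alpha_i, \beta_j \in \Gamma(B,\oo_B)_1$ such that
\[
f := x^p + \sum_{i=0}^{p-1} a_i x^i + \sum_{i=0}^{q-1} \alpha_i x^i \theta \quad\text{and}\quad g := x^q\theta + \sum_{i=0}^{q-1} b_i x^i \theta + \sum_{i=0}^{p-1} \beta_i x^i
\]
lie in the ideal of $\mathcal{Z} \subset \cc^{1|1}\times B$. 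Assembling these coefficients yields a morphism $\phi : B \to \cc^{p+q|p+q}$, and uniqueness of $\phi$ is immediate from the freeness asserted by Proposition \ref{gen}.

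The key remaining step is to show that $\mathcal{Z}$ equals the pullback $\mathcal{Z}' := \phi^*\widetilde{\mathcal{Y}}$, the subspace cut out by $(f,g)$. The inclusion $\mathcal{Z} \subset \mathcal{Z}'$ is automatic, giving a surjection $\pi_*\oo_{\mathcal{Z}'} \twoheadrightarrow \pi_*\oo_{\mathcal{Z}}$. Using the leading terms of $f$ and $g$ to iteratively reduce higher monomials modulo $(f,g)$, I would argue that $1,x,\dots,x^{p-1},\theta,\dots,x^{q-1}\theta$ already span $\pi_*\oo_{\mathcal{Z}'}$ as an $\oo_B$-module; since these elements are sent to a free basis of $\pi_*\oo_{\mathcal{Z}}$, the surjection is forced to be an isomorphism. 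Once $\mathcal{Z} = \mathcal{Z}'$ is established, the pullback is flat of rank $(p\vb q)$, so Theorem \ref{FlatteningStratification} forces $\phi$ to factor uniquely through $\cc^{p+q|p+q}_{(p,q)} \simeq \cc^{p|p}$. The main obstacle is precisely the identification $\mathcal{Z} = \mathcal{Z}'$: a priori $(f,g)$ might generate only a proper subideal of the defining ideal of $\mathcal{Z}$, but the rank-comparison argument based on the global freeness from Proposition \ref{gen} is what rules out any hidden syzygies and closes the argument.
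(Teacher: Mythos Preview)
Your proposal is correct and follows the same strategy as the paper: extract the classifying map $B \to \cc^{p+q|p+q}$ from the coefficients of $f,g$ and then invoke the universal property of the flattening stratum $\cc^{p+q|p+q}_{(p,q)}\simeq\cc^{p|p}$. You are in fact more careful than the paper about the point that $(f,g)$ generates the \emph{full} defining ideal of $\mathcal{Z}$---the paper simply asserts this (citing only Lemma~\ref{basis}, the fiberwise statement), whereas your rank-comparison argument via the surjection $\pi_*\oo_{\mathcal{Z}'}\twoheadrightarrow\pi_*\oo_{\mathcal{Z}}$ onto a free module is exactly what is needed to close that gap.
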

\begin{proof}
Pick a flat family in $\mathcal{H}^{p|q}_{\cc^{1|1}}(X)$.
\[\xymatrix{
\mathcal{X}\ \ar@{^{(}->}[r] \ar[dr]_p & \cc^{1|1} \times X \ar[d] \\
& X
}\]
Then $p_* \oo_\mathcal{X}$ is generated by $1,x,\cdots,x^{p-1}$ and $\theta, x\theta, \cdots, x^{q-1}\theta$, by Proposotion \ref{gen}. Therefore, $\mathcal{X}$ is defined by an ideal \[\left(x^p +\sum_{i=0}^{p-1}c_i x^i + \sum_{i=0}^{q-1} \gamma_i x^i \theta,\, x^q \theta + \sum_{i=0}^{q-1}d_i x^i\theta + \sum_{i=0}^{p-1}\delta_i x^i  \right) \] for some $c_i,d_i, \gamma_i,\delta_i \in H^0\left(X ,\oo_X \right)$ where $c_i,d_i$ are commutative and $\gamma_i,\delta_i$ are anticommutative. Then there is a unique map $\phi : X \rightarrow \cc^{p+q|p+q}$ such that the pull-back of $\pi: \mathcal{X} \rightarrow \cc^{p+q \vb p+q}$ is $p$. Since $p$ is flat, $\phi$ factors through $\cc^{p+q|p+q}_{(p,q)}$. Therefore, the pull-back of $\mathcal{X}$ to $\cc^{p+q|p+q}_{(p,q)}$ is the universal family.
\end{proof}

For the rest of this paper, we fix coordinates 
\begin{equation}\label{coordinate}
(a_0,\cdots,a_{p-q-1},b_0,\cdots,b_{q-1} \vb \alpha_0,\cdots,\alpha_{p-q-1},\beta_0,\cdots,\beta_{q-1})
\end{equation} on the super Hilbert scheme $\Hilb^{p|q}(\cc^{1|1}) \simeq \cc^{p|p}$, so that the ideal of the universal family is generated by 
\[
(x^q+\sum_{i=0}^{q-1}b_i x^i)(x^{p-q}+\sum_{i=0}^{p-q-1}a_i x^i) +  \sum_{i=0}^{q-1}\beta_i x^i (\theta + \sum_{i=0}^{p-q-1}\alpha_i x^i)\] and
\[
(x^q + \sum_{i=0}^{q-1}b_i x^i)(\theta + \sum_{i=0}^{p-q-1}\alpha_i x^i). \]

\section{Families of 0-dimensional subspaces on supercurves}

For ordinary functors, it is well known that a functor is representable if it has an open covering by representable open subfunctors (\cite{stack}).
We can use the same logic to show the representability of the Hilbert functor $\mathcal{H}^{p|q}_S$ for a smooth supercurve $S$.

\bigskip

\begin{proof}\textit{of Theorem \ref{hilb}.}
\smallskip

 Let $U= \dot{\bigcup}_i U_i \subset S$ be a finite disjoint union of open subspaces of $S$ such that $U_i$ is isomorphic to some nonempty open subspace of $\coo$. 
Let $\mathcal{H}^{p|q}_{S,U}$ be the open subfunctor of $\mathcal{H}^{p|q}_S$ defined as
$
\coprod_{ \substack {\sum p_i=p \\ \sum q_i =q} } \prod_i\; \mathcal{H}^{p_i|q_i}_{U_i}
$

Then the Hilbert functor $\mathcal{H}^{p|q}_S$ is the union of open subfunctors $\bigcup_{U} \mathcal{H}^{p|q}_{S,U}$, and each $\mathcal{H}^{p|q}_{S,U}$ is representable by a smooth superspace of dimension $(p|p)$ as an application of Theorem $\ref{local}$.

To be specific, let's consider any family $\mathcal{Z} \subset S \times X$ in $\mathcal{H}^{p|q}_S(X)$. 
For each $x \in X$, we can find a neighborhood $V$ of $x$ such that the support of $\mathcal{Z}|_{\pi^{-1}(V)}$ is contained in $U \times X$ for some $U=\dot{\cup}_i U_i \subset S$. 
Then there is a map from $V$ to $\Hilb^{p|q}(U)$ such that $\mathcal{Z}|_{\pi^{-1}(V)}$ is the pullback of the universal family. 
Let $X=\cup_\alpha V_\alpha$ be an open covering of $X$ constructed as above and let $U_\alpha \subset S $ be the corresponding open subspaces. Then the universality of the Hilbert scheme guarantees that we can glue $\Hilb^{p|q}(U_\alpha)$ for all $V_\alpha$.

%To check $\Hilb^{p|q}(X)$ is Hausdorff, pick $Z_1$, $Z_2$ $\in \Hilb^{p|q}(X)$ such that $Z_1 \neq Z_2$.
%Then there exists $V_1$ and $V_2$ such that $Z_i \in \Hilb^{p|q}(V_i)$. 
%If ${\rm Supp} (Z_1) = {\rm Supp}(Z_2) $, then $Z_2 \in \Hilb^{p|q}(V_1)$ which is Hausdorff by the construction.
%If ${\rm Supp} (Z_1) \neq {\rm Supp}(Z_2) $, then shrink $V_i$'s enough so that $\Hilb^{p|q}(V_1)$ and $\Hilb^{p|q}(V_2)$ are disjoint. 

%(useless/weired part)Let $V\subset S$ be a finite disjoint union of open subspaces of $S$ such that each disjoint open subspace is isomorphic to some nonempty open subspace of $\coo$. Then $U \cap V$ is also a disjoint union of open subspaces which of each is isomorphic to open subspace of $\coo$. 

 Therefore, the Hilbert functor $\mathcal{H}^{p|q}_S$ is representable by a dimension $(p|p)$ smooth superspace.

\end{proof}

For the ordinary Hilbert scheme of points, the Hilbert scheme $\Hilb^{4}(\cc^{3})$ is not smooth. We can see this by checking the non-smoothness of $\Hilb^{4}(\cc^{3})$ at $I=m^2=(x,y,z)^2$. 
More details about the smoothness or non-smoothness of the Hilbert scheme $\Hilb^{p|q}(\cc^{1|2})$ can be found in my PhD thesis. Actually, it turns out that $\Hilb^{p|q}(\cc^{1|2})$ is not smooth for certain cases.

\section{(Non)splitness of the Hilbert scheme}

In the previous sections, we found local defining equations and gluing maps of the Hilbert scheme $\Hilb^{p|q} C$ where $C$ is a supermanifold of dimension $1 \vb 1$. We use these to construct an obstruction class and show (non) splitness of the Hilbert scheme.

\subsection{A split Hilbert scheme}
\begin{ex}
Consider the line bundle $\oo_\po(k)$ on $\po$. Then the supermanifold $S:=S\left( \po, \oo_\po(k) \right)$ has dimension $1|1$ and the Hilbert scheme $\Hilb^{p|q}(S)$ has dimension $p\vb p$. 

Consider the standard open cover $\pp^1=U_0 \cup U_1$ and assign affine coordinates on each $S|_{U_i}$
\[ S |_{U_0} \simeq \cc^{1|1}_{x,\theta} \, , \qquad S |_{U_1} \simeq \cc^{1|1}_{y,\psi} \]

Observe that $\left( \Hilb^{1|1}(S) \right)_b = \po$ and, due to Theorem \ref{local}, we have $\Hilb^{1|1}(S)|_{U_0}\simeq \cc^{1|1}_{a,\alpha}$ and $\Hilb^{1|1}(S)|_{U_1} \simeq \cc^{1|1}_{b,\beta}$. Then the Hilbert scheme $ \Hilb^{1|1}(S)$ is split since it has odd dimension $1$, but it still has interesting structure.

 From the relations 
\[ x=a+\alpha \theta,\ y=b+\beta \psi,\ y=1/x,\ \psi=\theta/x^k \text{ and } b={1}/{a}\] on the intersection $U_0 \cap U_1$, we can compute the transition map $\beta = -a^{k-2} \alpha$. Therefore, $\Hilb^{1|1}(S) = S(\po,  W)$ where $W=\oo(k -2)=\oo(-2) \otimes \oo(k)$ and $\Hilb^{1|1}(S)$ is split.
\end{ex}

\subsection{The super Hilbert scheme $\Hilb^{2|1}\left( S\left( \po, \oo_\po(k) \right) \right)$}\label{nonsplit}

Consider the supercurve $S:=S\left( \po, \oo_\po(k) \right)$. Note that the bosonic part of $\Hilb^{2|1}\left( S \right)$ is $\po \times \po$. 

Let $\Delta \subset \po \times \po$ be the diagonal. Let $U_{ij}=U_i \times U_j \subset \po_{[z_0;z_1]} \times \po_{[w_0;w_1]}$ be the open subset where $U_i$ is defined by $z_i \neq 0$ and $U_j$ is defined by $w_j\neq 0$.  

Consider the open cover $\po \times \po = \bigcup_{k=1}^4 V_i$, where 
\[ V_1:=U_{00},\quad V_2:= U_{10}-\Delta, \quad V_3:=U_{01}-\Delta\quad \text{and} \quad V_4:=U_{11}.\]

Then we can see that the Hilbert scheme $\Hilb^{2|1}(S)$ can be covered by four open subsets
\[
\Hilb^{2|1}(S)=\bigcup_{k=1}^4 \Hilb^{2|1}(S)\big{|}_{V_k} 
\]

Let $p_{10}$ and $p_{01}$ to be the projections to the reduced parts
\[p_{10}: \Hilb^{1|1}(S |_{U_1}) \times \Hilb^{1|0} (S|_{U_0}) \rightarrow U_1 \times U_0 \subset \po \times \po\] 
\[p_{01}: \Hilb^{1|1}(S |_{U_0}) \times \Hilb^{1|0} (S|_{U_1}) \rightarrow U_0 \times U_1 \subset \po \times \po\] 
Note that, since $\Hilb^{1|1}(S |_{U_i})$ and $\Hilb^{1|0}(S |_{U_i})$ are isomorphic to the affine space $\cc^{1|1}$, we can take the natural projection to the reduced parts. 

Let  $\Delta^* := p^* \Delta$ be the pullback of the diagonal for each $p=p_{10},p_{01}$. 

First, note that we can naturally identify
\begin{align*}
\Hilb^{2|1}(S) \big{|}_{V_2} \simeq \; \Hilb^{1|1}(S|_{U_1}) \times \Hilb^{1|0}(S|_{U_0}) -\Delta^* \\
\Hilb^{2|1}(S) \Big{|}_{V_3} \simeq \; \Hilb^{1|1}(S|_{U_0}) \times \Hilb^{1|0}(S |_{U_1}) -\Delta^*
\end{align*}

Assign coordinates as in (\ref{coordinate})
\begin{align}
S|_{U_0} \simeq \; \cc^{1|1}_{x,\theta}\quad\, &\\
S|_{U_1}  \simeq\; \cc^{1|1}_{y,\psi}\quad\, &\\
\Hilb^{2|1}(S)|_{V_1} \simeq &\; \cc^{2|2}_{a_1,a_2 \vb \alpha_1,\alpha_2}\label{a} \\ 
\Hilb^{2|1}(S) \big{|}_{V_2} \simeq &\; \Hilb^{1|1}(S|_{U_1}) \times \Hilb^{1|0}(S|_{U_0}) -\Delta^* \label{b1}\\
\simeq & \; \cc^{1|1}_{b_1|\beta_1}\times \cc^{1|1}_{b_2|\beta_2} -\widetilde{\Delta}\label{b2}\\
\Hilb^{2|1}(S) \Big{|}_{V_3} \simeq&\; \Hilb^{1|1}(S|_{U_0}) \times \Hilb^{1|0}(S |_{U_1}) -\Delta^*\\
\simeq & \; \cc^{1|1}_{c_1\vb \gamma_1} \times \cc^{1|1}_{c_2\vb \gamma_2} - \widetilde{\Delta} \label{c}\\
\Hilb^{2|1}(S)|_{V_4} \simeq &\; \cc^{2|2}_{d_1,d_2 \vb \delta_1,\delta_2} \label{d}
\end{align}
where $\widetilde{\Delta}$ is defined by  $b_1b_2=1$ in (\ref{b2}) and $c_1c_2=1$ in (\ref{c}).

Observe that the isomorphism (\ref{b2}) and (\ref{c}) are given by
\begin{align*}
\cc^{1|1}_{b_1|\beta_1}\times \cc^{1|1}_{b_2|\beta_2} -\widetilde{\Delta} 
& \rightarrow \Hilb^{1|1}(S|_{U_1}) \times \Hilb^{1|0}(S|_{U_0}) -\Delta^* \\
\left( (b_1|\, \beta_1),(b_2|\,\beta_2) \right) \qquad
 &\mapsto \left< y+b_1+ \beta_1 \psi \right> \times \left< x+b_2, \theta + \beta_2 \right>\\
&\rightarrow \Hilb^{2|1}(S) \Big{|}_{V_2}\\
&\mapsto \left< \left(y+b_1+ \beta_1 \psi \right) \left(x+b_2\right), \left(y+b_1+ \beta_1 \psi \right)( \theta + \beta_2 ) \right>
\end{align*}
and
\begin{align*}
\cc^{1|1}_{c_1\vb \gamma_1} \times \cc^{1|1}_{c_2\vb \gamma_2} - \widetilde{\Delta} 
&\rightarrow \Hilb^{1|1}(S|_{U_0}) \times \Hilb^{1|0}(S |_{U_1}) -\Delta^* \\
\left( (c_1|\, \gamma_1),(c_2|\,\gamma_2) \right) \qquad
 &\mapsto \left< x+c_1+\gamma_1\theta \right> \times \left< y+c_2, \psi + \gamma_2 \right>\\
&\rightarrow \Hilb^{2|1}(S) \Big{|}_{V_3}\\
&\mapsto \left< (x+c_1+\gamma_1\theta )(y+c_2),(x+c_1+\gamma_1\theta )(\psi+\gamma_2) \right>.
\end{align*}

On the intersection $V_1 \cap V_3$, by using the condition $c_2\neq 0$ and identities $y=\frac{1}{x}$ and $\psi =\frac{\theta}{x^k}$, we get\begin{equation}\label{glue}
\begin{split}
&\left< (x+c_1+\gamma_1\theta )(y+c_2),(x+c_1+\gamma_1\theta )(\psi+\gamma_2) \right>\\
& =\left< (x+c_1+\gamma_1 \theta)(x+\frac{1}{c_2}),(x+c_1+\gamma_1\theta)(\theta+\frac{\gamma_2}{(-c_2)^k}) \right> \\
& = \left< \left(x+ c_1 - \frac{\gamma_1\gamma_2}{(-c_2)^{k}}\right) (x+c_2^{-1}) 
+ \gamma_1 (c_2^{-1}-c_1) \left( \theta+\frac{\gamma_2}{(-c_2)^{k}}\right),\right. \\
&\left. \qquad\qquad\qquad\qquad\qquad\qquad \quad
\left(x+ c_1 - \frac{\gamma_1\gamma_2}{(-c_2)^{k}}\right) \left( \theta+\frac{\gamma_2}{(-c_2)^{k}}\right) \right>.
\end{split}\end{equation}
Then the gluing map on $V_1 \cap V_3$ is given by 
\begin{equation}
(a_1,a_2 \vb \alpha_1,\alpha_2) =
 \left(c_1 - \gamma_1 \gamma_2 (-c_2)^{-k},\, \frac{1}{c_2} \, \bigg{|} \, \gamma_1\left(\frac{1}{c_2}-c_1\right),\, \gamma_2(-c_2)^{-k}\right)\label{on13}\end{equation}
Similarily, we can compute the gluing map on $V_1 \cap V_2$
\begin{equation}
(a_1,a_2 \vb \alpha_1,\alpha_2) = 
\left(\frac{1}{b_1} + \beta_1\beta_2(-b_1)^{k-2},\, b_2 \, \Big{|}  - \beta_1(-b_1)^{k-2}(b_2-\frac{1}{b_1}) ,\, \beta_2 \right)
\label{on12}\end{equation}
The gluing maps on $V_2 \cap V_4$ and $V_3 \cap V_4$ can be computed by using symmetry.

Let $W$ be the vector bundle defined by $W^\vee=\mathcal{J}/\mathcal{J}^2$ where $\mathcal{J} \subset \oo_{\Hilb^{2|1}(S)}$ is the ideal sheaf generated by all nilpotents.
To check the (non)splitness of the $\Hilb^{2|1} (S)$, 
it is enough to find the obstruction class  $ \omega_2=w(\varphi^{(1)}) \in {\rm H}^1(\po \times \po,\mathcal{T}_{\po \times \po} \otimes \wedge ^2 W^\vee) $ and check it is vanishing or not. (\cite{projected})

Since $\wedge ^2 W^\vee$ is a line bundle on $\po \times \po$, there are $a$ and $b$ such that 
\[
\wedge ^2 W^\vee \simeq \oo(a,b)
\]
\begin{lem}
$a=k-3$ and $b=-k-1$ \label{lb}
\end{lem}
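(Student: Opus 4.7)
The plan is to compute the transition cocycle of the line bundle $\wedge^2 W^\vee$ on two well-chosen overlaps and then read off the bidegree by factoring off a \v{C}ech coboundary. Since $W^\vee=\mathcal{J}/\mathcal{J}^2$ is a rank-$2$ bundle on $\po\times\po$, with local frames $\{\alpha_1,\alpha_2\}$, $\{\gamma_1,\gamma_2\}$, $\{\beta_1,\beta_2\}$ on $V_1$, $V_3$, $V_2$ respectively, the change-of-frame matrix on each overlap is the linear-in-odd part of the gluing formulas already recorded in (\ref{on13}) and (\ref{on12}). Both matrices happen to be diagonal, so the transition of $\det W^\vee=\wedge^2W^\vee$ is simply the product of their diagonal entries.

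From (\ref{on13}), on $V_1\cap V_3$ the transition reads
\[
g_{13}\;=\;\left(\tfrac{1}{c_2}-c_1\right)(-c_2)^{-k}\;=\;(-1)^k\,c_2^{-k-1}(1-c_1c_2).
\]
The crucial observation is that the factor $1-c_1c_2$ vanishes on $V_3=U_{01}-\Delta$ precisely along $\Delta$, which has been removed; therefore $(1-c_1c_2)^{-1}$ is a nowhere-zero holomorphic function on $V_3$, so $1-c_1c_2$ is a coboundary for the cover $\{V_1,V_3\}$. Thus $g_{13}$ is cohomologous to $(-1)^k c_2^{-k-1}$, and since the pull-back of $\oo_\po(b)$ from the second factor has transition function $c_2^{b}$ on $U_0^{(2)}\cap U_1^{(2)}$, we get $b=-k-1$.

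The same argument applied to (\ref{on12}) gives
\[
g_{12}\;=\;-(-b_1)^{k-2}\bigl(b_2-\tfrac{1}{b_1}\bigr)\;=\;(-1)^{k-1}\,b_1^{k-3}(b_1b_2-1),
\]
and $b_1b_2-1$ vanishes on $V_2=U_{10}-\Delta$ exactly along $\Delta$ (which is again excluded), so it is a coboundary. The surviving monomial $b_1^{k-3}$ matches the transition of the pull-back of $\oo_\po(k-3)$ from the first factor, giving $a=k-3$.

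The main (and really only) conceptual point is the coboundary step: the "error" factors $1-c_1c_2$ and $b_1b_2-1$ both cut out precisely the diagonal $\Delta$, which is exactly the locus that has been excised from $V_3$ and $V_2$. This is what allows them to be inverted on the relevant chart and thereby absorbed into a change of trivialization; everything else is bookkeeping.
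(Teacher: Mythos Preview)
Your proof is correct and follows essentially the same route as the paper's. Both arguments absorb the factor $(b_1b_2-1)$ (respectively $1-c_1c_2$) by noting it is a unit on $V_2$ (respectively $V_3$); the paper phrases this as the coordinate change $\beta_1(b_1b_2-1)\mapsto\beta_1$, while you phrase it as a \v{C}ech coboundary, but these are the same operation. The only cosmetic difference is that the paper reads off $b$ from the overlap $V_2\cap V_4$ restricted to the slice $b_1=0$ (where no extra unit appears), whereas you use $V_1\cap V_3$ and absorb $1-c_1c_2$; both give $b=-k-1$.
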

\begin{proof}

First of all, to compute $a$, restrict $\wedge^2W^\vee $ to $\po \times \{0\}$.
\[
\wedge^2W^\vee \big{|}_{\po \times \{0\}} \simeq \oo_\po (a)
\] 
Then the transition map between $V_1$ and $ V_2$ gives the transition map between $U_0$ and $U_1$, where $U_0$ and $U_1$ are standard open sets on $\po \simeq \po \times \{0\} $. By setting $b_2=0$, the gluing map (\ref{on12}) gives us 
\begin{equation}
\alpha_1\alpha_2 = \beta_1\beta_2(-b_1)^{k-3} \label{k-3}\end{equation} 
Note that the section $\alpha_1 \alpha_2$ generates the line bundle $\wedge^2 W^\vee$ on $V_1$ and $\beta_1 \beta_2$ generates the line bundle $\wedge^2 W^\vee$ on $V_2$.
Therefore, (\ref{k-3}) gives us $a=k-3$.

To compute $b$, restrict the line bundle $\wedge^2W^\vee$ to $\{0\} \times \po$. 
Then by plugging in $b_1=0$ to the transition map on $V_2 \cap V_4$, we get
\begin{align*}
\delta_1 &\mapsto \frac{\beta_1}{b_2}\\
\delta_2 &\mapsto (-b_2)^{-k}\beta_2
\end{align*}
Therefore, $\delta_1 \delta_2 = - \beta_1 \beta_2 (-b_2)^{-k-1}$ and $b=-k-1$. 

\end{proof}

\bigskip

We are now ready to prove the main theorem.
Note that every supermanifold of odd dimension $2$ modeled on $M$ and $W$ is uniquely determined up to isomorphism by a cohomology class $\omega \in H^1(M, \mathcal{T}_{M}\otimes\wedge^2 W^\vee)$. Therefore, the Hilbert scheme $\Hilb^{2|1}(S)$ of odd dimension $2$ is split if and only if the obstruction class $\omega$ vanishes.   

\begin{thm} \label{split} Let $S$ be the Hilbert scheme $S(\po, \oo_\po(k) )$.
The Hilbert scheme $\Hilb^{2|1}(S)$ is not split for all $k\neq 0$ and it is split for $k=0$.
\end{thm}
\begin{proof}

\par~
First, note that $\mathcal{T}_{\po \times \po}\otimes \wedge^2 W^\vee$ is the sheaf of $\wedge^2 W^\vee$-valued even derivations on $\po \times \po$.

\begin{enumerate}[i)]

\item The transition map (\ref{on12}) on $V_{12}:= V_1 \cap V_2$

\begin{align*}
a_1 &\mapsto \frac{1}{b_1} + \beta_1\beta_2(-b_1)^{k-2}\\
a_2 &\mapsto b_2\\
\alpha_1 &\mapsto - \beta_1(-b_1)^{k-2}(b_2-\frac{1}{b_1})\\
\alpha_2 &\mapsto \beta_2
\end{align*}

defines a section $\omega_2^{12} \in \Gamma( V_1 \cap V_2, \mathcal{T}_{\po \times \po} \otimes \wedge ^2 W^\vee)$ as 
\[\omega_2^{12}
\; = \; \beta_1\beta_2(-b_1)^{k-2} \cfrac{\partial}{\partial a_1}
\;= \; - \cfrac{\alpha_1\alpha_2}{a_2-a_1}\ \cfrac{\partial}{\partial a_1}\]
Here the identification $\alpha_1\alpha_2 = - (-b_1)^{k-2}(b_2-\cfrac{1}{b_1}) \beta_1 \beta_2$ is used.

\item On $V_{13}:= V_1 \cap V_3$

The transition map (\ref{on13}) defines $\omega_2^{13} \in \Gamma( V_1 \cap V_3, \mathcal{T}_{\po \times \po} \otimes \wedge^2 W^\vee)$
 \[\omega_2^{13}
	=  -(-c_2)^{-k}\gamma_1\gamma_2 \ \cfrac{\partial}{\partial a_1}
	=-\frac{\alpha_1\alpha_2}{a_2-a_1} \ \cfrac{\partial}{\partial a_1}\]

\item On $V_{23} := V_2 \cap V_3$.

 We have $\omega_2^{23}=0$ because $V_{23} \subset V_{12}\cap V_{13}$.

\item  The transition map on $V_{24}:=V_2 \cap V_4$
gives a section $\omega_2^{24} \in \Gamma( V_2 \cap V_4, \mathcal{T}_{\po \times \po} \otimes \wedge ^2 W^\vee)$ as 
\[\omega_2^{24}
\; = \; \frac{\beta_1\beta_2}{(-b_2)^k}\ \cfrac{\partial}{\partial b_1}
\;= \; \frac{\delta_1\delta_2}{d_2-d_1} \ \cfrac{\partial}{\partial d_1}\]

\end{enumerate}

Then non-vanishing of the obstruction class $\omega_2$ can be proven by showing that there is no element $\sigma =( \sigma_i )_i \in \prod_i \Gamma( V_i,\mathcal{T} \otimes \wedge^2 W^\vee )$ such that the boundary map sends $\left(\sigma_i \right)_i$ to $\left( \omega_2^{ij} \right)_{ij}$.

Suppose that there are $\sigma_i$'s such that $\omega_2^{ij}=\sigma_j-\sigma_i$ on each $V_{ij}$.
More specifically, fix coordinates $\left( [z_0;z_1],[w_0;w_1]\right) \in \po \times \po$ and let 
$f(\frac{z_1}{z_0},\frac{w_1}{w_0}),  \bar{f} (\frac{z_1}{z_0},\frac{w_1}{w_0})$, $g(\frac{z_0}{z_1},\frac{w_1}{w_0}),\ \bar{g}(\frac{z_0}{z_1},\frac{w_1}{w_0})$, 
$h(\frac{z_1}{z_0},\frac{w_0}{w_1}),\ \bar{h}(\frac{z_1}{z_0},\frac{w_0}{w_1})$ and 
$l(\frac{z_0}{z_1},\frac{w_0}{w_1}),\ \bar{l}(\frac{z_0}{z_1},\frac{w_0}{w_1})$ be polynomials such that
\begin{equation}\begin{split}\label{sigma}
\sigma_1=&\ f\left( \frac{z_1}{z_0},\frac{w_1}{w_0} \right)\alpha_1\alpha_2\ \cfrac{\partial}{\partial(\frac{z_1}{z_0})} 
  +\bar{f}\left( \frac{z_1}{z_0},\frac{w_1}{w_0} \right)\alpha_1\alpha_2\ \cfrac{\partial}{\partial(\frac{w_1}{w_0})} \\
\sigma_2=&\ g\left( \frac{z_0}{z_1},\frac{w_1}{w_0} \right) \beta_1 \beta_2\ \cfrac{\partial}{\partial(\frac{z_0}{z_1})} 
	+\bar{g}\left( \frac{z_0}{z_1},\frac{w_1}{w_0}\right) \beta_1 \beta_2\ \cfrac{\partial}{\partial(\frac{w_1}{w_0})}\\
\sigma_3=&\ h\left( \frac{z_1}{z_0},\frac{w_0}{w_1} \right) \gamma_1 \gamma_2\ \cfrac{\partial}{\partial (\frac{z_1}{z_0})}
	+\bar{h}\left( \frac{z_1}{z_0},\frac{w_0}{w_1} \right) \gamma_1 \gamma_2\ \cfrac{\partial}{\partial (\frac{w_0}{w_1})}\\
\sigma_4=&\ l\left(\frac{z_0}{z_1},\frac{w_0}{w_1}\right) \delta_1\delta_2\ \cfrac{\partial}{\partial (\frac{z_0}{z_1})}
	+\bar{l}\left( \frac{z_0}{z_1},\frac{w_0}{w_1} \right) \delta_1\delta_2\ \cfrac{\partial}{\partial (\frac{w_0}{w_1})}
\end{split}	
\end{equation}

Observe that
\begin{equation}\begin{split}\label{om12}
 \omega_2^{12}
=&\ (-\frac{z_0}{z_1})^{k-2}\beta_1\beta_2 \cfrac{\partial}{\partial (\frac{z_1}{z_0})} \\
=&\ \sigma_2 -\sigma_1\\
=&\ -f\left( \frac{z_1}{z_0},\frac{w_1}{w_0} \right)\alpha_1\alpha_2\cfrac{\partial}{\partial(\frac{z_1}{z_0})} 
  -\bar{f}\left( \frac{z_1}{z_0},\frac{w_1}{w_0} \right)\alpha_1\alpha_2\ \cfrac{\partial}{\partial(\frac{w_1}{w_0})}\\& \qquad\qquad\qquad\qquad
	+g\left( \frac{z_0}{z_1},\frac{w_1}{w_0} \right) \beta_1 \beta_2\ \cfrac{\partial}{\partial(\frac{z_0}{z_1})} 
	+\bar{g}\left( \frac{z_0}{z_1},\frac{w_1}{w_0}\right) \beta_1 \beta_2\ \cfrac{\partial}{\partial(\frac{w_1}{w_0})}\\
=&\ f \cdot \left( b_2-\frac{1}{b_1} \right)(-b_1)^{k-2} 
\beta_1\beta_2 \cfrac{\partial}{\partial (\frac{z_1}{z_0})}  
-g \cdot \left( \frac{z_1}{z_0} \right)^2 
\beta_1 \beta_2 \cfrac{\partial}{\partial (\frac{z_1}{z_0})}
+\left( -\bar{f} \alpha_1 \alpha_2+ \bar{g}\beta_1\beta_2 \right)\cfrac{\partial}{\partial(\frac{w_1}{w_0})}
\end{split}\end{equation}
By comparing coefficients, we get 
\begin{equation}
\left( -\frac{z_0}{z_1} \right)^k = -g\left( \frac{z_0}{z_1},\frac{w_1}{w_0} \right)
+ f\left( \frac{z_1}{z_0}, \frac{w_1}{w_0} \right) 
\left( \frac{w_1}{w_0}-\frac{z_1}{z_0} \right) \left( - \frac{z_0}{z_1} \right)^k
\label{20}\end{equation}

Similarly, from $\omega_2^{13}$ we get
\begin{equation}
-\left( -\frac{w_1}{w_0} \right)^k = h \left( \frac{z_1}{z_0}, \frac{w_0}{w_1} \right)
-f\left( \frac{z_1}{z_0}, \frac{w_1}{w_0} \right)\left( \frac{w_1}{w_0} -\frac{z_1}{z_0} \right) \left( -\frac{w_1}{w_0}\right)^k
\label{21}\end{equation}
and $\omega_2^{23}$ gives 
\begin{equation}
h\left( \frac{z_1}{z_0} ,\frac{w_0}{w_1} \right) -g\left( \frac{z_0}{z_1},\frac{w_1}{w_0} \right) \left(- \frac{w_1}{w_0} \right)^k \left( -\frac{z_1}{z_0} \right)^k =0
\label{22}\end{equation}

\begin{enumerate}[ \text{Case} I.]

\item  $k>0$

If $k$ is positive, $g\left( \frac{z_0}{z_1},\frac{w_1}{w_0} \right) \left(- \frac{w_1}{w_0} \right)^k \left( -\frac{z_1}{z_0} \right)^k$ has a term with $w_0$ at the denominator for all nonzero $g$. 
Since $h\left( \frac{z_1}{z_0} ,\frac{w_0}{w_1} \right)$ can not have $w_0$ at the denominator, to make the equality (\ref{22}) true, $g$ and $h$ must vanish. Then the equation $(\ref{21})$ implies
\begin{align*}
\left( -\frac{w_1}{w_0} \right)^k &= \;
f\left( \frac{z_1}{z_0}, \frac{w_1}{w_0} \right)\left( \frac{w_1}{w_0} -\frac{z_1}{z_0} \right) \left( -\frac{w_1}{w_0}\right)^k \\
\Rightarrow \qquad  \qquad 1 &= \;
f\left( \frac{z_1}{z_0}, \frac{w_1}{w_0} \right)\left( \frac{w_1}{w_0} -\frac{z_1}{z_0} \right)
\end{align*}
	which is a contradiction.

\item $k<0$

Observe that $g \left( \frac{z_0}{z_1}, \frac{w_1}{w_0} \right) \cdot \left(-\frac{w_1}{w_0} \right)^k \left( - \frac{z_1}{z_0}\right)^k $ has $z_1$ at the denominator for any nonzero $g \neq 0$.
Hence, the equation (\ref{22}) means that $h=g=0$. Then, as in the case $k>0$, the equation $(\ref{21})$
implies
$1 =
f\left( \frac{z_1}{z_0}, \frac{w_1}{w_0} \right)\left( \frac{w_1}{w_0} -\frac{z_1}{z_0} \right)$ which is a contradiction.

\item $k=0$

If $k=0$, (\ref{20}) and (\ref{22}) implies $h=g=-1$ and $f=0$. Furthurmore, from (\ref{om12}), we get $\bar{f}=0$ and $\bar{g}=0$. Then by using symmetry, we can also see that $\bar{h}=0$ and $\bar{l}=0$. 

In conclusion, we have
\begin{align*}
\sigma_1&=0\\
\sigma_2&=-\beta_1 \beta_2\ \cfrac{\partial}{\partial(\frac{z_0}{z_1})} \\
\sigma_3&=-\gamma_1 \gamma_2\ \cfrac{\partial}{\partial (\frac{z_1}{z_0})}\\
\sigma_4&=0
\end{align*}
and $\omega^{ij}_2= \sigma_j -\sigma_i$ for all $i$ and $j$.
Therefore, the obstruction class is vanishing. Therefore, the Hilbert scheme $\Hilb^{2|1} \left( S(\po, \oo_\po )\right)$ is isomorphic to its split model $\wedge^\bullet W$ where $W=\left(\mathcal{J}/\mathcal{J}^2 \right)^\vee$ and $\mathcal{J}\subset \oo_{\Hilb^{2|1} \left( S(\po, \oo_\po )\right)}$ is the ideal generated by nilpotents.
\end{enumerate}

\end{proof}

\section*{Acknowledgements}
\thispagestyle{empty}

I would like to thank Sheldon Katz for many discussions and suggestions. I would also like to thank Ron Donagi for helpful discussions. The author was partially supported by NSF award DMS-12-01089.

%\clearpage
%\section*{References}

%\nocite{*}
%\bibliographystyle{elsarticle-num}
%\bibliography{mjang}

\end{document}